\newtheorem{theorem}{Theorem}[section]
\newtheorem{proposition}[theorem]{Proposition}
\newtheorem{lemma}[theorem]{Lemma}
\newtheorem{corollary}[theorem]{Corollary}
\newtheorem*{namedtheorem}{\theoremname}
\newcommand{\theoremname}{testing}
\newenvironment{named}[1]{\renewcommand{\theoremname}{#1}\begin{namedtheorem}}{\end{namedtheorem}}
\theoremstyle{definition}
\newtheorem{question}[theorem]{Question}
\title[Torus Lorenz Links obtained by full twists]{Torus Lorenz Links obtained by Full Twists along Torus Links}
\author{Thiago de Paiva}
\address[]{School of Mathematics, Monash University, VIC 3800, Australia }
\email[]{thiago.depaivasouza@monash.edu}
\begin{document}

\begin{abstract}
All knots are known to be hyperbolic, satellite, or torus knots, and one important family is Lorenz links, or T-links, which arise from dynamics. However, it remains difficult to determine the geometric type of a Lorenz link from a description via dynamics or as a T-link. In this paper, we consider those T-links that are torus links. 
We show that T-links obtained by full twists along torus links can never be torus links, aside from a family of cases. This addresses a question of Birman and Kofman.
\end{abstract} 

\maketitle

\section{Introduction}

\emph{Lorenz links} are knotted closed periodic orbits in the flow of the Lorenz system, which is a system of three ordinary
differential equations discovered by meteorologist E. N. Lorenz when he was trying to find equations that model weather patterns~\cite{Lorenz}.

Birman and Kofman found that \emph{T-links} exactly coincide with Lorenz links~\cite[Theorem~1]{newtwis}.
For $2\leq r_1< \dots < r_k$, and all $s_i>0$, the T-link $T((r_1,s_1), \dots, (r_k,s_k))$ is defined to be the closure of the following braid
\[ (\sigma_1\sigma_2\dots\sigma_{r_1-1})^{s_1}(\sigma_1\sigma_2\dots\sigma_{r_2-1})^{s_2}\dots(\sigma_1\sigma_2\dots\sigma_{r_k-1})^{s_k}.\]
Here $\sigma_i$ is a standard generator of the braid group $B_{r_k}$.

Thurston showed that any non-trivial knot in the 3-sphere is either a \emph{torus knot}, a \emph{satellite knot}, or has \emph{hyperbolic complement} \cite{Thurston}. These are called the \emph{geometric types} of the knot.

Birman and Kofman asked the following question ~\cite[Question 6]{newtwis}.

\begin{question}\label{question}
When are Lorenz links torus links?
\end{question}

Recall that a \emph{torus link} is a link that can be embedded on the surface of an unknotted torus
$T$ in $S^3$. A torus link is denoted by $T(p,q)$ with $p, q$ integers. Here, $p$, $q$ denote the number of times that $T(p, q)$ wraps around the meridian, longitude, respectively, of $T$. The torus link $T(p, q)$  has $\gcd(p, q)$ link components. Thus, if $\gcd(p, q) = 1$, then $T(p, q)$ is a knot.

The purpose of this paper is to answer Question~\ref{question} for T-links obtained by full twists along $(p, q)$-torus links. 
The answer to this question was not previously considered in general for this family of T-links. However, as a consequence of Lee's work on the geometric classification of twisted torus knots, it is known when a T-knot of the form $T((r, sr), (p, q))$ is a torus knot for $p$, $q$ coprime \cite[Theorem 1.1]{LeeTorusknotsobtained}, \cite[Theorem 1.1]{Positively}. Therefore, when the T-link $T((a_1, a_1s_1), \dots, (a_n,a_ns_n), (p,q))$ is a knot we assume that $n>1$.

Our main theorem is the following.

\begin{theorem}\label{maintheorem}
Let $p,q, a_1, \dots, a_n, s_1, \dots, s_n$ be positive integers such that $1<q<p$ and $1<a_1<\dots<a_n<p$ with $a_i\neq q$.
\begin{itemize}
\item If $gcd(p, q)>1$, then the T-link $$T((a_1, s_1a_1), (a_2, s_2a_2), \dots, (a_n, s_na_n), (p, q))$$ is never a torus link;

\item Otherwise, if $q< a_n$, or $p \neq bq +1$, or $s_1 > 1$, or $a_2 \neq a_1 +1$ for $b>0$, then the T-knot $$T((a_1, s_1a_1), (a_2, s_2a_2), \dots, (a_n, s_na_n), (p, q))$$ is never a torus knot for $n>1$.
\end{itemize}
\end{theorem}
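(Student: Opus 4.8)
The plan is to argue by contradiction. Suppose the T-link $\mathbf{T} := T((a_1,s_1a_1),\dots,(a_n,s_na_n),(p,q))$ is a torus link $T(P,Q)$, and try to pin down $P$ and $Q$ from invariants that are computable from the positive braid $\beta = (\sigma_1\cdots\sigma_{a_1-1})^{s_1a_1}\cdots(\sigma_1\cdots\sigma_{a_n-1})^{s_na_n}(\sigma_1\cdots\sigma_{p-1})^{q}$ defining $\mathbf{T}$. Since $\mathbf{T}$ is a positive braid closure it is a positive, fibered link, so $P,Q>0$ and we may take $2\le P\le Q$. I would record three facts about $\mathbf{T}$. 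First, $\mathbf{T}$ has exactly $\gcd(p,q)$ components: each $(\sigma_1\cdots\sigma_{a_i-1})^{s_ia_i}$ is the full twist $\Delta_{a_i}^{2s_i}$, which is a pure braid, so $\beta$ and the torus braid $(\sigma_1\cdots\sigma_{p-1})^{q}$ induce the same permutation. Second, the fiber surface of $\mathbf{T}$ is the surface produced by Seifert's algorithm on the closed braid diagram, so $\chi(\mathbf{T}) = p - q(p-1) - \sum_{i} s_ia_i(a_i-1)$. Third, by the Birman--Kofman description of Lorenz links via Young diagrams (conjugate diagrams give the same link, and the braid index of a Lorenz link is the smaller of the two side lengths of the reduced diagram), the braid index of $\mathbf{T}$ equals $\mathfrak{b}:=\min\{\,p,\ s_1a_1+\cdots+s_na_n+q\,\}$, unless $\beta$ is non-reduced, in which case $\mathbf{T}$ collapses to a T-link on fewer strands; this collapse is exactly where the exceptional family of the theorem arises.

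Matching these with $T(P,Q)$ gives $\gcd(P,Q)=\gcd(p,q)$; since the braid index of $T(P,Q)$ is $\min(P,Q)=P$, also $P=\mathfrak{b}$; and since $\chi(T(P,Q))=P+Q-PQ$, we get the identity $(P-1)(Q-1)=(q-1)(p-1)+\sum_{i} s_ia_i(a_i-1)$. For the first bullet write $d=\gcd(p,q)\ge2$, so $d\mid P=\mathfrak{b}$. If $\mathfrak{b}=s_1a_1+\cdots+s_na_n+q<p$, then $d\mid\sum_i s_ia_i$; if $\mathfrak{b}=p$, the identity above forces $p-1\mid\sum_i s_ia_i(a_i-1)$ together with $Q=q+\sum_i s_ia_i(a_i-1)/(p-1)\ge p$ and $\gcd(p,Q)=d$. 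I would first dispose of every tuple that violates one of these. For the survivors, I would pass to a component $C$ of $\mathbf{T}$: restricting $\beta$ to the strands of $C$ turns each $\Delta_{a_i}^{2s_i}$ into a full twist on $b_i:=|\,C\cap\{1,\dots,a_i\}\,|$ strands and turns the torus braid into a positive braid for $T(p/d,q/d)$, so $C$ is a positive braid closure on $p/d$ strands with $\chi(C)=\chi(T(p/d,q/d))-\sum_i s_ib_i(b_i-1)$. But every component of $T(P,Q)$ is $T(P/d,Q/d)$, so $\chi(C)=\chi(T(P/d,Q/d))$ would have to hold; comparing the two expressions for $\chi(C)$, together with the constraints above, should be contradictory (the degenerate subcase in which all $b_i\le1$ is already excluded by the value of $\chi(\mathbf{T})$). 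Hence $\mathbf{T}$ is never a torus link when $\gcd(p,q)>1$.

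For the second bullet, $\gcd(p,q)=1$ and $\mathbf{T}$ is a knot; the case $n=1$ is handled by Lee's classification of the torus knots among the $T((r,sr),(p,q))$ cited above, so assume $n>1$. As before $P=\mathfrak{b}$ and $(P-1)(Q-1)=(q-1)(p-1)+\sum_i s_ia_i(a_i-1)$; for most tuples these already have no solution with $Q\ge P$, $Q$ an integer, and $\gcd(P,Q)=1$. For the remaining tuples I would argue in the four branches singled out by $q<a_n$, by $p\neq bq+1$, by $s_1>1$, and by $a_2\neq a_1+1$. In each branch I expect one of the following to apply: the invariants above can be pushed a little further to a contradiction; or a Birman--Kofman reduction lowers the number of full-twist regions and Lee's classification applies to the resulting $T((r,sr),(p,q))$; or the divisibility relations among $a_1,\dots,a_n,p$ that are forced under the branch hypothesis exhibit $\mathbf{T}$ as an iterated cable with at least two cabling steps, hence a satellite, which is not a torus knot. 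The point is that these four hypotheses are exactly the ones under which some mechanism is available, while the exceptional family $q\ge a_n$, $p=bq+1$, $s_1=1$, $a_2=a_1+1$ — for which $\mathbf{T}$ genuinely is a torus knot, for example $T((2,2),(3,3),(5,4))=T(5,6)$ — escapes all of them.

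The step I expect to be the main obstacle is the analysis of the ``tight'' tuples: those for which the braid index, the Euler characteristic, and the number of components are simultaneously consistent with some torus link and yet $\mathbf{T}$ is not one. This is precisely the regime in which the exceptional family lives, and there need not be any divisibility to exploit, so a finer invariant is required — most naturally the Alexander polynomial, which for a torus knot is a product of cyclotomic polynomials of a very restricted shape. One must then compute enough of $\Delta_{\mathbf{T}}$ from $\beta$ to separate the genuine torus knots from near-misses such as $T((3,3),(5,5),(14,13))$, which shares with $T(14,15)$ its braid index $14$, its genus $91$, and its single component, and for which $3,5$ divide neither $14$ nor each other, yet which is not $T(14,15)$.
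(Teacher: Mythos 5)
Your proposal is an outline rather than a proof, and its one load-bearing computation is wrong. The braid index claim $\mathfrak{b}=\min\{p,\,s_1a_1+\cdots+s_na_n+q\}$ (``the smaller of the two side lengths of the diagram'') is not the Birman--Kofman statement: the braid index of a Lorenz link is its trip number, which for a non-rectangular diagram is generally strictly smaller than both side lengths. For example, $T((3,3),(5,2))$ is isotopic to the closure of $\sigma_2^2(\sigma_1\sigma_2)^5$, a $3$-braid containing a full twist, so its braid index is $3$, not $\min(5,5)=5$; more generally the paper shows the braid index is $a_n$ when $q<a_n$ (Lemma~\ref{isopoty}) and $q$ when $a_n<q$ (Lemma~\ref{braidindex}), both typically smaller than your $\mathfrak{b}$. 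Your hedge about ``non-reduced'' presentations does not repair this, since these examples are perfectly reduced T-links. Because the identification $P=\mathfrak{b}$ is the backbone of both bullets, every subsequent divisibility and Euler-characteristic constraint you derive is built on a false premise. Beyond that, the decisive steps are never carried out: in the first bullet the per-component comparison ``should be contradictory'' is asserted, not proved (and with the unknown $b_i$ it is not obviously contradictory); in the second bullet you say ``in each branch I expect one of the following to apply,'' and the final paragraph concedes that the ``tight'' tuples require an Alexander-polynomial analysis you do not perform. Indeed your own near-miss example ($T((3,3),(5,5),(14,13))$ sharing braid index, genus and component number with $T(14,15)$) shows that the invariant-matching strategy, as set up, cannot close the argument.

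The idea you are missing is the one the paper actually runs on: work with a minimal-braid-index positive presentation that contains full twists (Lemma~\ref{isopoty} when $q<a_n$, Lemma~\ref{braidindex} when $a_n<q$, with minimality from the Franks--Williams full-twist criterion), and then invoke Los's theorem (Lemma~\ref{Los}) that any two minimal-index braid representatives of a torus link are isotopic in the complement of the braid axis. This lets one delete full twists --- i.e.\ twist along the unknotted axis --- and conclude that if the original closure were the torus link $T(a_n,k)$, the smaller T-link obtained by removing $(\sigma_1\cdots\sigma_{a_n-1})^{s_na_n}$ would be the torus link $T(a_n,k-s_na_n)$ with the \emph{same} axis. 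Comparing braid indices of the smaller T-link then gives a contradiction or feeds an induction on $n$ (Lemmas~\ref{q<a_n}, \ref{1}, \ref{2c}), terminating in Lee's classification of twisted torus knots that are torus knots; the excluded hypotheses $p=bq+1$, $s_1=1$, $a_2=a_1+1$, $q>a_n$ are exactly where that terminal step fails, not where a diagram ``collapses.'' No Alexander polynomials or component-by-component Euler characteristic bookkeeping is needed, and without some substitute for the Los untwisting mechanism your plan has no way to rule out the tuples where all your proposed invariants coincide with those of a torus link.
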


Therefore, Theorem~\ref{maintheorem} answers Question~\ref{question} for T-links obtained by full twists along $(p, q)$-torus links, aside from a family of cases.

The answer to Question~\ref{question} is also related to the geometric classification of T-links. 
The geometric classification of T-links started with the study of the geometric classification of twisted torus knots. The twisted torus knots that are T-knots have the form $T((r,sr), (p, q))$ for $p, q$ coprime. 
See \cite{dePaiva:Unexpected}, \cite{LeeTorusknotsobtained}, \cite{Positively}, \cite{hyperbolicity}, and \cite{LeeThiago} for more information about the geometric classification of twisted torus knots.
Recently Thiago de Paiva and Jessica Purcell extended these results to general T-knots obtained by full twists along $(p, q)$-torus knots \cite{twisting}, \cite{dePaivaPurcell:SatellitesLorenz}.  More precisely,
they proved that there exists $B>>0$ such that if each $s_i>B$, the T-knot $T((a_1, a_1s_1), \dots, (a_n,a_ns_n), (p,q))$ is hyperbolic if and only if there is at least one $a_i>q$ that is not a multiple of $q$ \cite{twisting}. But, they didn't make  $B$ explicit. However, Theorem~\ref{maintheorem} will help to find this $B$ as they are either hyperbolic or satellite knots if they are not torus knots.


\subsection{Acknowledgment}I thank Jessica Purcell, my supervisor, and Sangyop Lee for some helpful tips.

\section{Main section}
In this section we prove Theorem~\ref{maintheorem}. We start by proving some lemmas that will be used later to prove this theorem.

Due to \cite[Lemma 2.7]{dePaivaPurcell:SatellitesLorenz}, we assume that $a_1, \dots, a_n$ are different from $q$ for the T-link
$T((a_1, s_1a_1), \dots, (a_n, s_na_n), (p, q))$.

\begin{lemma}\label{isopoty}
Consider $q<a_n$. Then, there is an ambient isotopy  that takes the T-link $$K = T((a_1, s_1a_1), \dots, (a_n, s_na_n), (p, q))$$ to an equivalent link given by the closure of the braid
\begin{align*}
&(\sigma_{a_n-1}\dots \sigma_{a_n-q+1})^{p-a_n}(\sigma_1\dots \sigma_{a_1-1})^{s_1a_1} \dots (\sigma_1\dots \sigma_{a_{n-1}-1})^{s_{n-1}a_{n-1}} \\
&(\sigma_1\dots \sigma_{a_n-1})^{s_na_n+q},  
\end{align*}
where this isotopy happens in the complement of the braid axis of the last braid.
Furthermore, $K$ has braid index equal to $a_n$.
\end{lemma}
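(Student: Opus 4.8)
The plan is to present $K$ as a closed positive braid on $p$ strands, peel off its $p-a_n$ outermost strands one at a time by Markov destabilizations while monitoring what happens to the $(p,q)$-torus factor, and then read the braid index off the trip number of the resulting Lorenz link. By definition $K$ is the closure of
\[
\beta \;=\; w\cdot(\sigma_1\sigma_2\cdots\sigma_{p-1})^{q}\qquad\text{on }p\text{ strands},
\]
where $w=(\sigma_1\cdots\sigma_{a_1-1})^{s_1a_1}\cdots(\sigma_1\cdots\sigma_{a_{n-1}-1})^{s_{n-1}a_{n-1}}(\sigma_1\cdots\sigma_{a_n-1})^{s_na_n}$ involves only $\sigma_1,\dots,\sigma_{a_n-1}$; here I use $a_i<p$. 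Thus $\sigma_{a_n},\dots,\sigma_{p-1}$ occur in $\beta$ only inside the trailing torus factor.

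The combinatorial engine is the braid identity
\[
(\sigma_1\cdots\sigma_{m-1})^{q}\;=\;(\sigma_1\cdots\sigma_{m-2})^{q}\,(\sigma_{m-1}\sigma_{m-2}\cdots\sigma_{m-q}),\qquad 1\le q\le m-1,
\]
which is a routine induction on $q$ using the braid relations. Applied with $m=p$, it isolates a single $\sigma_{p-1}$ at the end of $\beta$; since neither $w$ nor $(\sigma_1\cdots\sigma_{p-2})^{q}$ involves $\sigma_{p-1}$, a cyclic conjugation (which does not change the closure) brings $\sigma_{p-1}$ to the far right and a destabilization deletes it, dropping to $p-1$ strands and leaving a factor $\sigma_{p-2}\cdots\sigma_{p-q}$ at the front. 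Iterating for $m=p,p-1,\dots,a_n+1$ is legitimate because at each stage the retained word still uses only $\sigma_{\le m-2}$ — this is exactly where $q<a_n$ and $a_i<p$ enter. The real content of this step is to check that the front factors created by the successive destabilizations, once pushed down by further applications of $\sigma_i\sigma_{i-1}\sigma_i=\sigma_{i-1}\sigma_i\sigma_{i-1}$ together with cyclic conjugations, telescope to precisely $(\sigma_{a_n-1}\sigma_{a_n-2}\cdots\sigma_{a_n-q+1})^{p-a_n}$; I would package this as an induction on $p-a_n$ showing that $w\,(\sigma_1\cdots\sigma_{p-1})^q$ reduces, relative to the first $a_n$ strands, to $(\sigma_{a_n-1}\cdots\sigma_{a_n-q+1})^{p-a_n}\,w\,(\sigma_1\cdots\sigma_{a_n-1})^q$ (probably with a suitably generalized statement carrying an extra front word through the induction). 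Absorbing the residual $(\sigma_1\cdots\sigma_{a_n-1})^q$ into the last block of $w$ turns $(\sigma_1\cdots\sigma_{a_n-1})^{s_na_n}$ into $(\sigma_1\cdots\sigma_{a_n-1})^{s_na_n+q}$, which is exactly the braid in the statement.

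For the assertion about the braid axis, note that every move above is a conjugation or a destabilization. Fix once and for all the braid axis $C$ of the final $a_n$-strand braid. A destabilization deleting the outermost strand of a closed braid is an ambient isotopy of $S^3$ supported in a ball that meets the closed braid in a single unknotted spanning arc near that strand and is disjoint from $C$, while the conjugations are isotopies supported near the braided part, again missing $C$. Composing these in order realizes the passage from the original closed braid to the final one by an ambient isotopy supported in the complement of $C$, as claimed. Finally, the displayed braid exhibits $K$ as a closed positive braid on $a_n$ strands, so its braid index is at most $a_n$; and since $q<a_n$ the block $(\sigma_1\cdots\sigma_{a_n-1})^{s_na_n+q}$ already uses each of $\sigma_1,\dots,\sigma_{a_n-1}$ at least twice, so no further destabilization is available. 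More decisively, $K$ is a Lorenz link and this braid presents it with trip number $a_n$, so by Birman--Williams its braid index equals $a_n$; alternatively, one checks that the Morton--Franks--Williams bound is sharp for this positive braid.

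I expect the main obstacle to be the second step — the bookkeeping showing that the front factors telescope cleanly to $(\sigma_{a_n-1}\cdots\sigma_{a_n-q+1})^{p-a_n}$ with the correct indices, and that the inductive reduction is applied to the correct block of strands at each stage. Once that word manipulation is secured, the braid-axis statement and the identification of the braid index are comparatively routine.
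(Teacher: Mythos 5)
Your overall strategy (rewrite the torus factor via $(\sigma_1\cdots\sigma_{m-1})^{q}=(\sigma_1\cdots\sigma_{m-2})^{q}(\sigma_{m-1}\cdots\sigma_{m-q})$, destabilize, and iterate) is a reasonable algebraic counterpart of the paper's geometric move, but the step you yourself flag as ``the real content'' is genuinely missing, and it is not routine. After the first destabilization you are left with the word $(\sigma_{p-2}\cdots\sigma_{p-q})\,w\,(\sigma_1\cdots\sigma_{p-2})^{q}$ on $p-1$ strands, and the leftover factor involves the new top generator $\sigma_{p-2}$ (and in general generators with indices depending on $m$, not the fixed indices $a_n-1,\dots,a_n-q+1$). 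So the next destabilization is not directly available: you must push these $m$-dependent staircase factors down through all subsequent stages, using braid relations interleaved with conjugations, and prove they recombine into exactly $(\sigma_{a_n-1}\cdots\sigma_{a_n-q+1})^{p-a_n}$. You state this as something you ``would package as an induction \dots probably with a suitably generalized statement,'' but you never formulate or prove that generalized statement; as written the proof of the displayed braid word is therefore incomplete. By contrast, the paper's proof avoids this bookkeeping by performing the move geometrically: the strand that travels once around the closure is an under strand, so it is pushed below the braid and slid directly into the leftmost $a_n$ positions, producing the fixed-index factor $(\sigma_{a_n-1}\cdots\sigma_{a_n-q+1})$ at every step.

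The second gap is the braid-axis clause, which is part of the statement and is actually used later (in the proof that $K$ is not a torus link when $q<a_n$, the isotopy is run backwards after deleting full twists), so it cannot be waved through. Your justification --- that a destabilization is ``supported in a ball \dots disjoint from $C$'' and conjugations are ``supported near the braided part, again missing $C$'' --- proves too much: the same words would show the isotopy avoids the braid axis of the original $p$-strand braid, which is impossible, since an isotopy in the complement of a circle preserves the winding number about it, and that winding number drops from $p$ to $a_n$. The claim is only salvageable if $C$ is positioned so that it encircles just the $a_n$ surviving strands in the original picture (with strands $a_n+1,\dots,p$ passing outside it), and one then checks that every conjugation, braid relation, and destabilization disk misses this particular circle; you never specify where $C$ sits during the intermediate stages, so nothing is verified. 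This is exactly what the paper's geometric description (push the under strand down, away from the axis of the final $a_n$-strand braid) is designed to deliver. Finally, on the braid index: your remark that ``no further destabilization is available'' is not a valid lower bound for braid index; however, your alternative appeals (trip number of a Lorenz link, or sharpness of the Morton--Franks--Williams bound for a positive braid containing a full twist, which is the paper's citation of Franks--Williams since $s_na_n+q\ge a_n$) are fine once the displayed braid is actually established.
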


\begin{proof}
We start with the standard braid of $K$:
$$(\sigma_1\dots \sigma_{a_1-1})^{s_1a_1} \dots (\sigma_1\dots \sigma_{a_n-1})^{s_na_n}(\sigma_1\dots \sigma_{p-1})^{q}.$$

If we pull the $(a_n-q+1)$-st strand around the braid closure anticlockwise, we see that it first reaches the bottom of the braid then passes through the sub-braid $(\sigma_1\dots \sigma_{p-1})^{q}$ and finally ends in the $(a_n+1)$-st strand as $q<a_n$.
So the $(a_n-q+1)$-st strand is connected to the $(a_n+1)$-st strand by a strand that goes one time around the braid closure. See the red strand in Figure~\ref{T1}. As this strand is an under strand, we can push it down and shrink it to reduce one strand from the last braid, as shown in Figure~\ref{T1}. After that, this strand becomes an under strand between the $(a_n-q+1)$ and the $(a_n+1)$-st strand. Thus, we obtain the braid
$$B = (\sigma_{a_n-1}\dots \sigma_{a_n-q+1})(\sigma_1\dots \sigma_{a_1-1})^{s_1a_1} \dots (\sigma_1\dots \sigma_{a_n-1})^{s_na_n}(\sigma_1\dots \sigma_{p-2})^{q}$$
after this isopoty.
\begin{figure}\label{T1}
\includegraphics[scale=0.23]{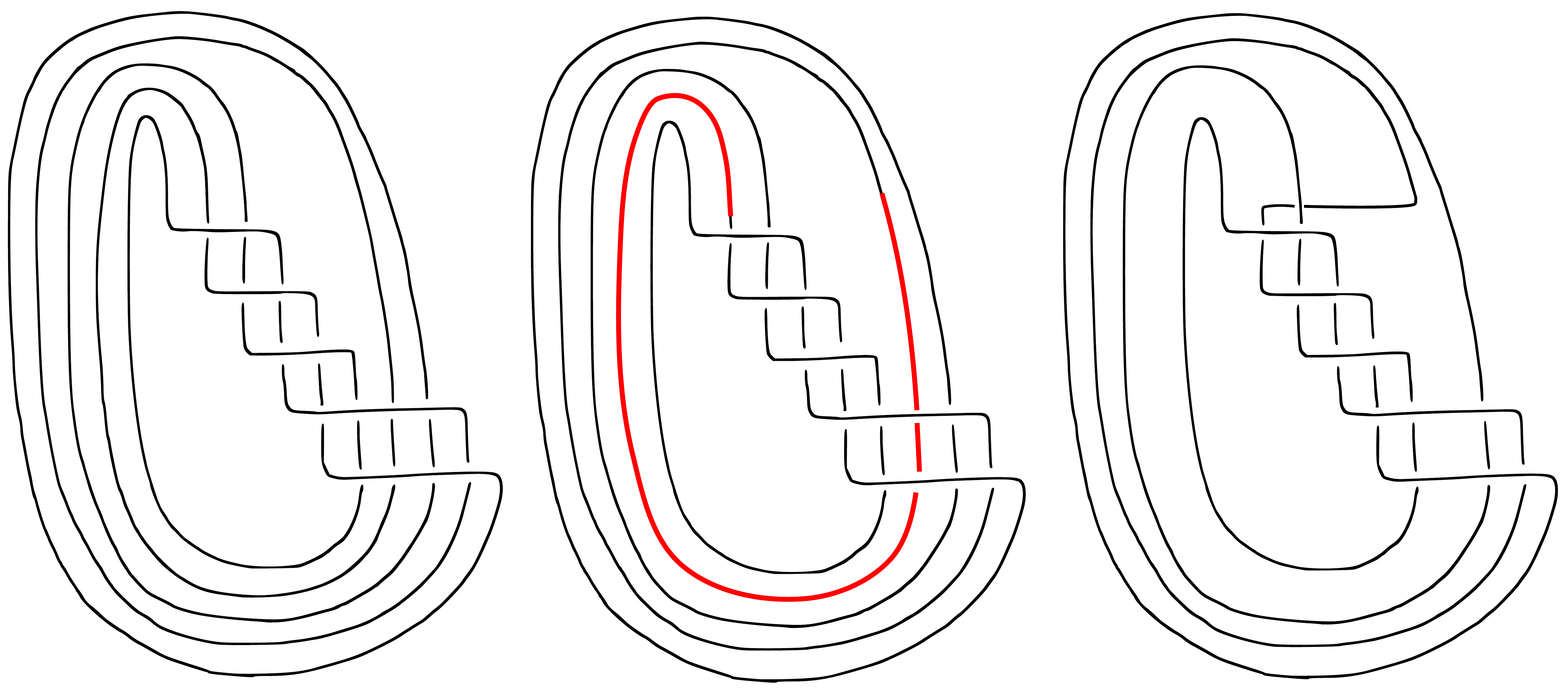} 
  \caption{The red strand in the second drawing can be replaced by a strand that goes below the third strand, as illustrated in the third drawing. This isopoty generalizes the isopoty in Figure 6 of \cite{Positively}.}
  \label{T1}
\end{figure}

As the sub-braid $(\sigma_{a_n-1}\dots \sigma_{a_n-q+1})$ of $B$ is a braid on the leftmost $a_n$ strands, the $(a_n-q+1)$-st strand is still connected to the $(a_n+1)$-st strand of $B$ by an under strand that goes one time around the braid closure.
Then we can apply the last isopoty again. After that, we obtain the braid
$$(\sigma_{a_n-1}\dots \sigma_{a_n-q+1})^2(\sigma_1\dots \sigma_{a_1-1})^{s_1a_1} \dots (\sigma_1\dots \sigma_{a_n-1})^{s_na_n}(\sigma_1\dots \sigma_{p-3})^{q}.$$
Therefore, after repeating this procedure $p-a_n-2$ more times, we finally obtain the braid
\begin{align*}
&(\sigma_{a_n-1}\dots \sigma_{a_n-q+1})^{p-a_n}(\sigma_1\dots \sigma_{a_1-1})^{s_1a_1} \dots (\sigma_1\dots \sigma_{a_{n-1}-1})^{s_{n-1}a_{n-1}} \\
&(\sigma_1\dots \sigma_{a_n-1})^{s_na_n+q}.  
\end{align*}

As $s_na_n+q> a_n$, the last braid has at least one full twist on the $a_n$ strands. So, it follows from \cite[Corollary 2.4]{Franks} that the link $K$ has braid index equal to $a_n$.
\end{proof}

\begin{lemma}\label{braidindex}
Consider $a_n < q < p$. Then, the T-link $$K = T((a_1, s_1a_1), \dots, (a_n, s_na_n), (p, q))$$ has braid index equal to $q$ and
is equivalent to the T-link $$K' = T((a_1, s_1a_1), \dots, (a_n, s_na_n), (q, p)).$$
\end{lemma}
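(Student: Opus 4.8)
The plan is to establish the equivalence $K\simeq K'$ first and then read off the braid index from it. Granting $K\simeq K'$, observe that $K'$ is the closure of the positive braid
\[
(\sigma_1\cdots\sigma_{a_1-1})^{s_1a_1}\cdots(\sigma_1\cdots\sigma_{a_n-1})^{s_na_n}(\sigma_1\cdots\sigma_{q-1})^{p}
\]
in $B_q$, which uses all $q$ strands, and since $p>q$ its last block factors as $(\sigma_1\cdots\sigma_{q-1})^{p-q}\,(\sigma_1\cdots\sigma_{q-1})^{q}$, exhibiting the braid as a positive braid multiplied by the full twist $(\sigma_1\cdots\sigma_{q-1})^{q}=\Delta_q^2$ on all $q$ strands. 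Hence \cite[Corollary~2.4]{Franks} gives that the braid index of $K'$, and therefore of $K$, equals $q$. So the whole content of the lemma is the equivalence $K\simeq K'$.

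For this, write the standard braid of $K$ in $B_p$ as $\beta\cdot(\sigma_1\cdots\sigma_{p-1})^{q}$, where $\beta=(\sigma_1\cdots\sigma_{a_1-1})^{s_1a_1}\cdots(\sigma_1\cdots\sigma_{a_n-1})^{s_na_n}$; since $a_n<q<p$ we have $\beta\in B_{a_n}\subseteq B_q\subseteq B_p$, and since every exponent $s_ia_i$ is a multiple of $a_i$, each block of $\beta$ is a full twist, so $\beta$ induces the trivial permutation. I would then reduce this $p$-strand braid down to a $q$-strand braid by iterating the ``pull a strand around the braid closure'' destabilization from the proof of Lemma~\ref{isopoty} (cf.\ Figure~6 of \cite{Positively}). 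The case treated there is $q<a_n$, in which one pulls strands until $a_n$ strands remain; the present case $a_n<q$ is complementary, and because $q<p$ there is at every stage an under-strand running once around the closure through the current torus block and joining two positions both exceeding $a_n$, so the move can be carried out without ever disturbing the front block $\beta$. One performs it exactly $p-q$ times, so that $q$ strands remain; at that point the block $(\sigma_1\cdots\sigma_{p-1})^{q}$ has become $(\sigma_1\cdots\sigma_{q-1})^{q}$ together with accumulated staircase factors in front of $\beta$, and using that $(\sigma_1\cdots\sigma_{q-1})^{q}=\Delta_q^2$ is central in $B_q$ and that $\beta$ is supported on the first $a_n<q$ strands one checks that the resulting closed braid is precisely $\widehat{\beta\,(\sigma_1\cdots\sigma_{q-1})^{p}}=K'$.

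The step I expect to be the main obstacle is exactly that last bookkeeping: after the $p-q$ destabilizations one must verify that the accumulated staircase factors combine to give $(\sigma_1\cdots\sigma_{q-1})^{p-q}$ on the nose, so that together with the central $\Delta_q^2$ they yield $(\sigma_1\cdots\sigma_{q-1})^{p}$ rather than some conjugate of it entangled with $\beta$; keeping careful track of which strand is removed at each stage, and of where the new staircase is inserted relative to $\beta$, is where the argument could fail. (A more topological alternative would be to use that $\widehat{(\sigma_1\cdots\sigma_{p-1})^{q}}$ is the torus link $T(p,q)$ on the standard Heegaard torus $\mathcal T\subset S^3$, that $\beta$ inserts nested full twists on a parallel bundle of $a_n$ strands running through a product region meeting $\mathcal T$ in an arc transverse to the $(p,q)$-curve, and that the homeomorphism of $S^3$ exchanging the two solid tori bounded by $\mathcal T$ carries $T(p,q)$ to $T(q,p)$ and, since $a_n<q$, carries this product region to an equivalent one for $T(q,p)$; there the obstacle shifts to checking that this homeomorphism really preserves the full nested family of product regions indexed by $a_1<\cdots<a_n$.)
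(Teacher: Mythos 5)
Your braid-index half is exactly the paper's argument: since $p>q$ the $q$-strand positive braid for $K'$ contains the full twist $(\sigma_1\cdots\sigma_{q-1})^{q}$, so \cite[Corollary~2.4]{Franks} gives braid index $q$. But the equivalence $K\simeq K'$, which you correctly identify as the whole content of the lemma, is not actually proved in your write-up: the paper disposes of it in one line by citing the duality of T-links, \cite[Corollary~3]{newtwis}, which in the case $a_n< q<p$ says precisely that $T((a_1,s_1a_1),\dots,(a_n,s_na_n),(p,q))$ and $T((a_1,s_1a_1),\dots,(a_n,s_na_n),(q,p))$ are the same link. Your plan to rederive this by iterating the destabilization of Lemma~\ref{isopoty} stalls exactly at the step you flag. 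After $p-q$ such moves the accumulated staircase is $(\sigma_{q-1}\cdots\sigma_{q-q+1})^{p-q}=(\sigma_{q-1}\cdots\sigma_{1})^{p-q}$, sitting on the opposite side of $\beta$ from the torus block; centrality of $\Delta_q^2$ and a conjugation only get you to the closure of $\beta\,(\sigma_{q-1}\cdots\sigma_{1})^{p-q}\Delta_q^2$, and turning the descending staircase $(\sigma_{q-1}\cdots\sigma_{1})^{p-q}$ into the ascending one $(\sigma_{1}\cdots\sigma_{q-1})^{p-q}$ is not a formality: the flip automorphism $\sigma_i\mapsto\sigma_{q-i}$ that exchanges the two words also moves $\beta$ to a braid supported on the last $a_n$ strands, so the identification ``on the nose'' does not follow from what you wrote. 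This is a genuine gap, not just bookkeeping you could suppress.

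The statement itself is of course true, and the clean repair is simply to invoke \cite[Corollary~3]{newtwis} (whose proof is essentially the transpose/duality symmetry of the Birman--Kofman Young-diagram description, i.e.\ the symmetry your parenthetical topological sketch gestures at), after which your Franks--Williams argument finishes the lemma exactly as in the paper. If you insist on an elementary destabilization proof, you must either carry out the staircase normalization carefully (tracking which strand is removed and where the new crossings land relative to $\beta$) or argue via the duality symmetry directly; as it stands the central equivalence is asserted rather than established.
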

\begin{proof}
As $a_n < q < p$, the T-link $K$ is equivalent to $K'$ \cite[Corollary 3]{newtwis}. Since $p>q$,  $K'$ has braid index equal to $q$ by  \cite[Corollary 2.4]{Franks}. 
\end{proof}

The next lemma was proved by Los \cite[Corollary 1.2]{Los}. See also \cite{de2022hyperbolic} for a quick proof sketch using the Markov theorem.
 
\begin{lemma}\label{Los}
Consider $\beta_1$, $\beta_2$ two braids which have minimal braid index and are representations of the same torus
link. Suppose that these two closed braids travel around the same braid axis $C$. Then, there is an isotopy in the complement of $C$ that takes $\beta_1$ to $\beta_2$.
\end{lemma}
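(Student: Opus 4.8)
The plan is to reduce the lemma to a Markov-theorem-without-stabilization statement for torus links. First I would fix the bookkeeping. Since $\beta_1$ and $\beta_2$ realize the minimal braid index of the torus link $L$ they both represent, they are braids on the same number of strands $n$, namely the braid index of $L$, and their closures are closed braids about the common axis $C$. The elementary point is that two kinds of moves on a closed braid preserve its isotopy class \emph{in the complement of the axis}: conjugation of the braid word (``braid isotopy'' of the closed braid) and the exchange move of Birman and Menasco. Each of these is realized by an ambient isotopy of $S^3$ supported away from $C$. So it would suffice to connect $\beta_1$ to $\beta_2$ by a finite sequence of conjugations and exchange moves.

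By Markov's theorem, $\beta_1$ and $\beta_2$ are certainly connected by \emph{some} sequence of conjugations and (de)stabilizations. The difficulty, and the heart of the matter, is that a stabilization changes the number of strands, hence the algebraic intersection number of the closed braid with a disk bounded by $C$; since that intersection number is invariant under isotopy in $S^3 \setminus C$, stabilizations are forbidden. One must therefore show the Markov sequence can be chosen to stay at the constant braid index $n$. This fails for links in general, so the argument has to exploit something special about torus links.

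That special input is the fact, due to Birman and Menasco, that torus links are exchange reducible: every closed braid representative of a torus link is carried to one of minimal braid index using destabilizations and exchange moves only, and consequently any two minimal-index braid representatives of a torus link differ by a finite sequence of braid isotopies and exchange moves. Applying this to $\beta_1$ and $\beta_2$, which are already of minimal index, produces the required axis-preserving sequence of moves, hence an isotopy of $S^3 \setminus C$ carrying the closure of $\beta_1$ to that of $\beta_2$. This reproves Los's statement \cite{Los}, and the short route just outlined is essentially the sketch in \cite{de2022hyperbolic}. A more geometric alternative would be to isotope each closed braid onto a standard Heegaard torus and use uniqueness of the torus link there, but checking the needed Seifert-fibered structure of $S^3 \setminus (C \cup \widehat{\beta_i})$ is itself delicate; the only nonformal step in the route above is the appeal to Birman--Menasco theory to remove the stabilizations.
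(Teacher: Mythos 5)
The paper does not actually prove this lemma: it is quoted as Los's result \cite[Corollary 1.2]{Los}, with a pointer to \cite{de2022hyperbolic} for a sketch via Markov's theorem, so any complete argument you give is necessarily a different route. Your route, however, has a genuine gap at its ``elementary point''. Isotopy classes of closed $n$-braids in the complement of the axis $C$ correspond exactly to conjugacy classes in the braid group $B_n$; so an isotopy in $S^3\setminus C$ is available precisely when the two braids are conjugate. The exchange move $w_1\sigma_{n-1}w_2\sigma_{n-1}^{-1}\mapsto w_1\sigma_{n-1}^{-1}w_2\sigma_{n-1}$ is \emph{not} of this kind: it preserves the number of strands but in general changes the conjugacy class (this is exactly its role in Birman--Menasco theory, where exchange equivalence classes can contain many, even infinitely many, conjugacy classes), and its geometric realization sweeps an arc across the axis, e.g.\ as a stabilization followed by braid isotopy and a destabilization. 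So it is not an ambient isotopy supported away from $C$.

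Consequently, what exchange reducibility of (iterated) torus knots gives you is only that $\beta_1$ and $\beta_2$ are related by braid isotopies and exchange moves, i.e.\ uniqueness of the minimal-index representative up to exchange equivalence; the lemma asserts the stronger uniqueness up to conjugacy. Closing that gap would require showing that every exchange move performed on a minimal-index closed braid representative of a torus link produces a conjugate braid, which is essentially the content of Los's theorem itself, so as written the argument begs the question. Two smaller points: the exchange reducibility result you invoke is due to Menasco (for iterated torus knots), building on Birman--Menasco and Birman--Wrinkle, not to Birman--Menasco for torus links; and it is stated for knots, whereas the lemma (and its use in the paper, e.g.\ in Lemma~\ref{q<a_n}) concerns torus \emph{links} with possibly several components, so even the weaker exchange-equivalence statement needs justification in that generality.
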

 


\begin{lemma}\label{q<a_n}
Consider $q< a_n <p$. Then, the T-link $$K = T((a_1, s_1a_1), \dots, (a_n, s_na_n), (p, q))$$ is not a torus link.
\end{lemma}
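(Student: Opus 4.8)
The plan is to use Lemma~\ref{isopoty} to put $K$ in a normal form as a braid on $a_n$ strands, then argue by contradiction: if $K$ were a torus link, its braid index $a_n$ would be realized by a second braid (the standard minimal braid of the torus link), and Lemma~\ref{Los} would give an isotopy in the complement of the braid axis carrying one to the other. One then extracts a contradiction from the structure of the braids, for instance by comparing exponent sums, linking numbers with the axis, or the cyclic word structure of the two braids.

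First I would invoke Lemma~\ref{isopoty} to replace $K$ by the closure of
\[
\beta = (\sigma_{a_n-1}\dots \sigma_{a_n-q+1})^{p-a_n}(\sigma_1\dots \sigma_{a_1-1})^{s_1a_1}\dots(\sigma_1\dots \sigma_{a_{n-1}-1})^{s_{n-1}a_{n-1}}(\sigma_1\dots \sigma_{a_n-1})^{s_na_n+q},
\]
a braid on $a_n$ strands, and record that $K$ has braid index exactly $a_n$. Suppose for contradiction $K = T(c,d)$ is a torus link. Since the braid index of $T(c,d)$ with $1<c\le d$ is $c$, we must have $c = a_n$, so $K$ is the closure of the standard torus braid $(\sigma_1\dots\sigma_{a_n-1})^d$ on $a_n$ strands for some $d$; comparing exponent sums (writhe) of the two $a_n$-strand braids pins down $d = (p-a_n)(q-1) + \sum_i s_i a_i(a_i-1) + (s_na_n+q)(a_n-1)$ divided by $a_n-1$ — but first I need $d>0$ and in fact $d\ge a_n$ so that $(\sigma_1\dots\sigma_{a_n-1})^d$ is the minimal braid, which follows since $s_na_n + q > a_n$ already contributes a full twist.

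Now both $\widehat{\beta}$ and the closure of $(\sigma_1\dots\sigma_{a_n-1})^d$ are minimal-index braid representatives of $T(a_n,d)$ traveling around the same axis, so Lemma~\ref{Los} provides an isotopy in the axis complement between them. The contradiction will come from the fact that $\beta$ genuinely uses the generator $\sigma_{a_n-1}$ in a "non-torus" way: the prefix block $(\sigma_{a_n-1}\dots\sigma_{a_n-q+1})^{p-a_n}$ together with $q<a_n$ means $\beta$ is not conjugate (as a braid word, up to the moves permitted in the axis complement, i.e.\ conjugation and not Markov stabilization) to a power of $\sigma_1\cdots\sigma_{a_n-1}$. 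Concretely, I would compare the images in the symmetric group (the underlying permutations must agree) and then use a more refined invariant — the simplest being the linking number of individual braid strands with the axis, or the fact that the closure of a power of the full positive cyclic word is a torus link only for the specific exponent, combined with the Birman–Kofman reduction (Markov moves / destabilization) showing that $\widehat{\beta}$ cannot be simplified to fewer strands whereas any genuinely "torus" presentation on $a_n$ strands would force $\sigma_{a_n-1}$ to appear with a controlled multiplicity.

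The main obstacle I anticipate is making the last step rigorous: "travels around the same axis and has the same braid index" is exactly the hypothesis of Lemma~\ref{Los}, which converts the problem to deciding whether $\beta$ and $(\sigma_1\cdots\sigma_{a_n-1})^d$ are related by an isotopy fixing the axis complement — equivalently, by conjugation in $B_{a_n}$ together with the finitely many ambient moves that do not change the number of strands. Distinguishing these two braids up to conjugacy in $B_{a_n}$ is delicate because conjugacy classes in braid groups are subtle; I expect the cleanest route is to use the hypothesis $q<a_n<p$ to show $\beta$ has strictly smaller "fractional Dehn twist coefficient" or smaller periodicity than $(\sigma_1\cdots\sigma_{a_n-1})^d$, or alternatively to find a sub-link (a component or a pair of components) of $\widehat\beta$ whose individual linking number with the axis differs from what any sublink of a torus link braid would give, and to handle the case $\gcd(p,q)>1$ (where $K$ is a proper link, not a knot) separately, exploiting that the components of a torus link all have equal linking number with the axis while the components of $\widehat\beta$ do not.
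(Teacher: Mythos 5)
Your setup is the same as the paper's: Lemma~\ref{isopoty} puts $K$ in the $a_n$-strand normal form and gives braid index $a_n$, so a putative torus structure forces $K=T(a_n,k)$, and Lemma~\ref{Los} then makes $\beta$ conjugate (isotopic in the axis complement) to $(\sigma_1\cdots\sigma_{a_n-1})^k$. But the step that must actually produce the contradiction is missing, and you flag it yourself as ``the main obstacle.'' The tools you float for that step cannot do the job as stated: the underlying permutation (up to conjugacy), the exponent sum, the linking numbers of closure components with the axis, and the fractional Dehn twist coefficient are all conjugacy invariants, so once you have \emph{assumed} the conjugacy supplied by Lemma~\ref{Los} they automatically agree for the two braids; to get a contradiction you would have to compute one of them for $\beta$ independently and exhibit a discrepancy, which you do not do (and for a knot the ``unequal linking numbers of components with the axis'' idea is vacuous, while for a link it is unsubstantiated). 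In short, you have reduced the problem to showing that $\beta$ is not conjugate to a torus braid, and then left that reduction unresolved.

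The paper closes exactly this gap by a different device, combined with induction on $n$. Since $s_na_n$ is a multiple of $a_n$, the block $(\sigma_1\cdots\sigma_{a_n-1})^{s_na_n}$ is a power of the full twist and hence central; deleting it from $\beta$ therefore deletes it from the conjugate torus braid as well, so by Lemma~\ref{Los} the closure of the shortened braid $B_1$ is the torus link $T(a_n,\,k-s_na_n)$. On the other hand, running the isotopy of Lemma~\ref{isopoty} backwards identifies that same closure as the shorter T-link $T((a_1,s_1a_1),\dots,(a_{n-1},s_{n-1}a_{n-1}),(p,q))$. For $n=1$ this link is $T(p,q)$ itself, and the classification of torus links forces $a_1\in\{p,q\}$, contradicting $q<a_1<p$; for $n>1$ one either contradicts the induction hypothesis (when $q<a_{n-1}$) or, when $q>a_{n-1}$, uses Lemma~\ref{braidindex} to force $k-s_na_n=q$ and hence a trivial sub-braid, again impossible. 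Without this ``cancel the central full twists and re-identify the closure as a smaller T-link'' step (or a genuinely worked-out substitute for proving non-conjugacy), your argument does not go through.
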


\begin{proof}
We prove this lemma by induction. 

For when $n= 1$, consider that $K = T((a_1, s_1a_1), (p, q))$ is a torus link. By Lemma~\ref{isopoty}, $K$
is equivalent to the link given by the closure of the braid
$$B_1 = (\sigma_{a_1-1}\dots \sigma_{a_1-q+1})^{p-a_1}(\sigma_1\dots \sigma_{a_1-1})^{s_1a_1+q}.$$
and has braid index equal to $a_1$. So, $K$ is the $(a_1, k)$-torus link with $k>0$.

By Lemma~\ref{Los}, the link $K_2$ given by the closure of the braid
$$B_2 = B_1(\sigma_{a_1-1}^{-1}\dots \sigma_{1}^{-1})^{s_1a_1} = (\sigma_{a_1-1}\dots \sigma_{a_1-q+1})^{p-a_1}(\sigma_1\dots \sigma_{a_1-1})^{q}$$ is the $(a_1, k - s_1a_1)$-torus link.
The braid $B_2$ represents the $(p, q)$-torus link. Thus, $a_1$ is equal to $p$ or $q$, which is a contradiction.

Now we can consider that all T-links $T((a_1, s_1a_1), \dots, (a_i, s_ia_i), (p, q))$ with $q<a_i$ are not torus links for $i<n$.

Suppose that $K = T((a_1, s_1a_1), \dots, (a_n, s_na_n), (p, q))$ is a torus link. By Lemma~\ref{isopoty}, $K$ is equivalent to the link given by the closure of the braid 
\begin{align*}
&(\sigma_{a_n-1}\dots \sigma_{a_n-q+1})^{p-a_n}(\sigma_1\dots \sigma_{a_1-1})^{s_1a_1} \dots (\sigma_1\dots \sigma_{a_{n-1}-1})^{s_{n-1}a_{n-1}} \\
&(\sigma_1\dots \sigma_{a_n-1})^{s_na_n+q}  
\end{align*}
and has braid index equal to $a_n$. So, $K$ is the $(a_n, k)$-torus link with $k>0$.
By Lemma~\ref{Los}, the link $K_1$ given by the closure of the braid
\begin{align*}
&B_1 = (\sigma_{a_n-1}\dots \sigma_{a_n-q+1})^{p-a_n}(\sigma_1\dots \sigma_{a_1-1})^{s_1a_1} \dots (\sigma_1\dots \sigma_{a_{n-1}-1})^{s_{n-1}a_{n-1}} \\
&(\sigma_1\dots \sigma_{a_n-1})^{q}  
\end{align*}
is the $(a_n, k-s_na_n)$-torus link. 
Since the last isopoty happened in the complement of the braid axis of the last braid, we can apply the isopoty of Lemma~\ref{isopoty} in the backward direction to see that $K_1$ is the T-link $T((a_1, s_1a_1), \dots, (a_{n-1}, s_{n-1}a_{n-1}), (p, q))$.

Suppose that $q> a_{n-1}$. Then, by Lemma~\ref{braidindex}, $K_1$ has braid index equal to $q$.
So, $k-s_na_n = q$.
This implies that the sub-braid $$(\sigma_{a_n-1}\dots \sigma_{a_n-q+1})^{p-a_n}(\sigma_1\dots \sigma_{a_1-1})^{s_1a_1} \dots (\sigma_1\dots \sigma_{a_{n-1}-1})^{s_{n-1}a_{n-1}}$$ of $B_1$ is the trivial braid as $K_1$ is the torus link $T(a_n, q)$, but this is impossible as $a_{n-1}, \dots, a_1 > 1$. Thus, $q< a_{n-1}$. However, this contradicts the induction hypothesis.

Therefore, $K$ is never a torus link.
\end{proof}

\begin{lemma}\label{1}
Consider $n>1$. If $s_1>1$ or $a_2 \neq a_1 +1$, then the T-knot $$K = T((a_1, s_1a_1), (a_2, s_2a_2), \dots, (a_n, s_na_n), (p, bp+1))$$ is not a torus knot for $b>0$.
\end{lemma}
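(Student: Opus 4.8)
The plan is to induct on $n$, reducing the general statement to the case $n=2$, which is governed by Lee's classification of the twisted torus knots $T((r,sr),(p',q'))$ that are torus knots. Suppose, for a contradiction, that $K$ is a torus knot, and let $\beta$ be its standard braid on $p$ strands. Since $b>0$, the block $(\sigma_1\cdots\sigma_{p-1})^{bp+1}$ contains the full twist $(\sigma_1\cdots\sigma_{p-1})^{bp}$ on the $p$ strands, so $K$ has braid index $p$ by \cite[Corollary~2.4]{Franks}; hence $K=T(p,d)$ with $d>p$, and by Lemma~\ref{Los} the braid $\beta$ is conjugate in $B_p$ to $(\sigma_1\cdots\sigma_{p-1})^d$. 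The element $(\sigma_1\cdots\sigma_{p-1})^{bp}$ is central in $B_p$, so cancelling it (exactly as in the proof of Lemma~\ref{q<a_n}) shows that the braid
\[\alpha=(\sigma_1\cdots\sigma_{a_1-1})^{s_1a_1}\cdots(\sigma_1\cdots\sigma_{a_n-1})^{s_na_n}(\sigma_1\cdots\sigma_{p-1})\]
is conjugate to $(\sigma_1\cdots\sigma_{p-1})^{d-bp}$; comparing exponent sums gives $d-bp>0$, so $\widehat\alpha=T((a_1,s_1a_1),\dots,(a_n,s_na_n),(p,1))$ is the torus knot $T(p,d-bp)$. Since $a_n<p$, the generator $\sigma_{p-1}$ occurs exactly once in $\alpha$, and destabilizing down to $a_n$ strands identifies $\widehat\alpha$ with $\widetilde K:=T((a_1,s_1a_1),\dots,(a_{n-1},s_{n-1}a_{n-1}),(a_n,s_na_n+1))$ (a knot, since destabilization and cancelling a full twist preserve the number of components). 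This braid contains a full twist on $a_n$ strands, so $\widetilde K$ has braid index $a_n$; combined with $\widetilde K=T(p,d-bp)$ and $a_n<p$ this forces $d-bp=a_n$, and in particular $\widetilde K$ is a torus knot.

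Now $\widetilde K$ is again a T-knot obtained by full twists along a torus knot, of precisely the form treated in this lemma: its final pair $(a_n,s_na_n+1)$ plays the role of $(p,bp+1)$ with ``$b$'' $=s_n>0$, we have $1<a_1<\cdots<a_{n-1}<a_n$, the hypothesis ``$s_1>1$ or $a_2\neq a_1+1$'' is unchanged, and it has one fewer twisting block. If $n>2$, the inductive hypothesis applied to $\widetilde K$ gives a contradiction, so it remains to treat the base case $n=2$.

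For $n=2$, $\widetilde K=T((a_1,s_1a_1),(a_2,s_2a_2+1))$ is a twisted torus knot $T((r,sr),(p',q'))$ with $2\le r=a_1<p'=a_2$ and $\gcd(a_2,s_2a_2+1)=1$. Running the same step one level down — $\widetilde K$ has braid index $a_2$, so by Lemma~\ref{Los} its standard braid is conjugate to $(\sigma_1\cdots\sigma_{a_2-1})^{e}$ for some $e>a_2$; cancelling the central full twist $(\sigma_1\cdots\sigma_{a_2-1})^{s_2a_2}$ and destabilizing — identifies $T((a_1,s_1a_1),(a_2,1))$ with both $T(a_1,s_1a_1+1)$ and $T(a_2,e-s_2a_2)$, and since the first of these has braid index $a_1<a_2$ we get $e-s_2a_2=a_1$; by the classification of torus knots, $\{a_1,a_2\}=\{a_1,s_1a_1+1\}$, i.e.\ $a_2=s_1a_1+1$. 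Finally, since $\widetilde K$ is a torus knot, Lee's classification of the twisted torus knots $T((r,sr),(p',q'))$ that are torus knots \cite[Theorem~1.1]{LeeTorusknotsobtained}, \cite[Theorem~1.1]{Positively} forces $s_1=1$, hence $a_2=a_1+1$, contradicting the hypothesis. Therefore $K$ is not a torus knot.

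The main obstacle I expect is the bookkeeping in the reduction: one must check at each stage that the top generator occurs only once (so that destabilization is legitimate), that the relevant block carries a full twist (so that \cite[Corollary~2.4]{Franks} pins the braid index and forces the torus parameter to equal the next $a_i$), and that only central full twists are ever cancelled (so that conjugacy, hence the applicability of Lemma~\ref{Los}, transfers, just as in Lemma~\ref{q<a_n}). The second delicate point is the base case: one must confirm that the parameters produced by the reduction, namely $a_2=s_1a_1+1$ with $s_1\ge2$ (so $a_1<a_2-1$ and $q'=s_2a_2+1\equiv1\pmod{a_2}$), lie outside every torus-knot case of Lee's classification.
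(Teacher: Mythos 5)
Your proposal is correct and follows essentially the same route as the paper: pin the braid index with \cite[Corollary~2.4]{Franks}, use Lemma~\ref{Los} to strip the full twists while remaining a torus knot, destabilize via Markov moves to drop to the next block, iterate (the paper's ``continuing with this argument'' is exactly your induction on $n$), and finish with Lee's classification of twisted torus knots at the two-block stage. Your extra bookkeeping in the base case (deriving $a_2=s_1a_1+1$ before invoking Lee to force $s_1=1$) is a harmless refinement of the paper's direct appeal to \cite[Theorem~1.1]{LeeTorusknotsobtained} and \cite[Theorem~1.1]{Positively}.
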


\begin{proof}Since $p$, $bp+1$ are coprime, $K$ is a knot.

Consider that $K $ is a torus knot. By \cite[Corollary 2.4]{Franks}, $K$ has braid index equal to $p$. By Lemma~\ref{Los}, the T-knot $$T((a_1, s_1a_1), (a_2, s_2a_2), \dots, (a_n, s_na_n), (p, 1))$$ is also a torus knot. By a Markov move, the last T-knot is equivalent to the T-knot $$T((a_1, s_1a_1), (a_2, s_2a_2), \dots, (a_n, s_na_n+1)).$$ By \cite[Corollary 2.4]{Franks}, this T-knot has braid index equal to $a_n$. Then, the T-knot $$T((a_1, s_1a_1), (a_2, s_2a_2), \dots, (a_{n-1}, s_{n-1}a_{n-1}), (a_n, 1)),$$ which is equivalent to the T-knot, by a Markov move, 
$$T((a_1, s_1a_1), (a_2, s_2a_2), \dots, (a_{n-1}, s_{n-1}a_{n-1}+1)),$$ is also a torus knot. Continuing with this argument, we obtain that the T-knot $$T((a_1, s_1a_1), (a_2, s_2a_2+1)),$$which is equivalent to the T-knot $T((a_1, s_1a_1), (s_2a_2+1, a_2))$ by Lemma~\ref{braidindex}, is also a torus knot, but this is a contradiction by \cite[Theorem 1.1]{LeeTorusknotsobtained} and \cite[Theorem 1.1]{Positively}.
\end{proof}

\begin{lemma}\label{2c}
Let $a_n < q < p$. Assume $n>1$ when $p$ and $q$ are coprime. 
If $p \neq bq +1$, or $s_1 > 1$, or $a_2 \neq a_1 +1$ for $b>0$,
then the T-link $$K = T((a_1, s_1a_1), \dots, (a_n, s_na_n), (p, q))$$ is not a torus link.
\end{lemma}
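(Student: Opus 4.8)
The plan is to split into cases according to the residue $c:=p\bmod q$, writing $p=bq+c$ with $b\geq 1$ (as $p>q$) and $0\leq c\leq q-1$. The hypothesis ``$p\neq bq+1$, or $s_1>1$, or $a_2\neq a_1+1$'' says precisely that either $c\neq 1$, or else $c=1$ together with ($s_1>1$ or $a_2\neq a_1+1$). I would dispose of $c=1$ first: then $\gcd(p,q)=1$, so $n>1$ by the standing hypothesis, and by Lemma~\ref{braidindex} $K$ is equivalent to $T((a_1,s_1a_1),\dots,(a_n,s_na_n),(q,bq+1))$; since $a_n<q$ this is exactly the shape to which Lemma~\ref{1} applies, and the remaining hypothesis is precisely the hypothesis of Lemma~\ref{1}, so $K$ is not a torus knot.

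From now on $c\neq 1$, and I would argue by induction on $p$ (the base being $p$ small, where $p>q>a_n\geq 2$ forces $p\geq 4$ and the surviving instances are either vacuous, covered by the coprime--$n=1$ exclusion, of the form $c=1$ just treated, or checkable directly). Suppose $K$ is a torus link. By Lemma~\ref{braidindex}, $K$ has braid index $q$, so $K=T(q,k)$ with $k\geq q$, and its standard $q$--strand braid $\beta$ together with the standard $q$--strand braid $(\sigma_1\cdots\sigma_{q-1})^k$ of $T(q,k)$ are two minimal braid representatives around a common axis $C$; by Lemma~\ref{Los} they are isotopic in $S^3\setminus C$. Removing $b$ full twists on the $q$ strands from each, i.e.\ multiplying by $(\sigma_1\cdots\sigma_{q-1})^{-bq}$ — which is realized by $b$ twists along a disk bounded by $C$ and therefore preserves isotopy in $S^3\setminus C$, exactly as in the proof of Lemma~\ref{q<a_n} — I would conclude that
\[
L:=T((a_1,s_1a_1),\dots,(a_n,s_na_n),(q,c))
\]
(read as $T((a_1,s_1a_1),\dots,(a_n,s_na_n))$ when $c=0$) is isotopic in $S^3\setminus C$ to $T(q,k-bq)$; in particular $L$ is a torus link.

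The argument then finishes by analysing $L$. If $c=0$, the braid of $L$ on $q$ strands uses only $\sigma_1,\dots,\sigma_{a_n-1}$, so $L$ is the split union of the nontrivial link $T((a_1,s_1a_1),\dots,(a_n,s_na_n))$ with a $(q-a_n)$--component unlink, whereas $T(q,k-bq)$ is nonsplit or an unlink — a contradiction. If $c=a_i$ for some $i$, I would first apply \cite[Lemma~2.7]{dePaivaPurcell:SatellitesLorenz} to rewrite $L$ with strictly fewer twisting pairs and re-enter the argument. If $2\leq c<a_n$ and $c\neq a_i$ for all $i$, then $L$ has the form treated in Lemma~\ref{q<a_n} (with ``$q$'' $=c$ and ``$p$'' $=q$, since $c<a_n<q$), so $L$ is not a torus link — again a contradiction. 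Finally, if $a_n<c<q$, then $L$ is equivalent by Lemma~\ref{braidindex} to $T((a_1,s_1a_1),\dots,(a_n,s_na_n),(c,q))$, a T--link obtained by full twists along a $(q,c)$--torus link with $a_n<c<q$; since $q<p$, the induction hypothesis gives that $L$ is not a torus link unless $L$ lies in the excluded family ($q=b'c+1$, $s_1=1$, $a_2=a_1+1$), and in that remaining sub-case I would continue the reduction of $L$ in the style of Lemma~\ref{1} — repeatedly flipping the base via Lemma~\ref{braidindex}, stripping full twists via Lemma~\ref{Los}, and destabilizing a trailing exponent--$1$ syllable by a Markov move — down to a twisted torus knot $T((a_1,s_1a_1),(P,Q))$, and then compare its Seifert genus (computed from the Seifert surface of its positive braid diagram) and its braid index (via \cite[Corollary~2.4]{Franks}) against the classification of twisted torus knots that are torus knots in \cite[Theorem~1.1]{LeeTorusknotsobtained} and \cite[Theorem~1.1]{Positively}, obtaining a contradiction.

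I expect the main obstacle to be this last sub-case: carrying the chain of Lemma~\ref{braidindex}/Lemma~\ref{Los}/Markov reductions all the way down while keeping exact track of the parameters, checking at every step that the braid index is as claimed so that Lemma~\ref{Los} is applicable, and matching the terminal twisted torus knot against Lee's classification to rule out that it is a torus knot. A secondary point is the bookkeeping that makes the induction well-founded — induction on $p$, with each reduction strictly decreasing the larger of the two base parameters (or the number of twisting pairs) — together with the boundary instances in which $n=1$ forces $p$ and $q$ to be non-coprime.
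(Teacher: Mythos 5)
Your treatment of the cases $c=1$, $c=0$, and $2\leq c<a_n$ follows the same route as the paper (remove the $b$ full twists on the $q$ strands via Lemma~\ref{Los}, then use irreducibility of torus links, Lemma~\ref{1}, and Lemma~\ref{q<a_n} respectively). The genuine gap is in the remaining case $a_n<c<q$, which is where the real content of the lemma lies. There you set up an induction on $p$, and the inductive hypothesis is the lemma itself, which carries the exclusion clause; so when the reduced link $L=T((a_1,s_1a_1),\dots,(a_n,s_na_n),(q,c))$ happens to satisfy $q=b'c+1$, $s_1=1$ and $a_2=a_1+1$ (a perfectly reachable configuration, since the outer hypotheses impose no constraint on $s_1$, $a_2$ when $c\neq 1$), the induction gives you nothing. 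Your proposed rescue --- ``continue the reduction in the style of Lemma~\ref{1} down to a twisted torus knot and compare genus and braid index with Lee's classification'' --- is exactly the part you admit you have not carried out, and it is not a routine verification: the chain of flips, twist removals and Markov moves changes the parameters in a way that is not controlled by your induction variable, and for links (as opposed to knots) Lee's classification is not available. So as written the argument does not close.

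The paper closes this case without any induction, by a braid-index comparison you never make: since $a_n<c=p-rq<q$, Lemma~\ref{braidindex} applies to $L$ written as $T(\dots,(q,c))$ and gives that $L$ has braid index $c$; on the other hand $L$ is the torus link $T(q,k-rq)$, whose braid index is $q$ or $k-rq$. The first is impossible because $c<q$, and the second forces $k=p$, i.e.\ $K=T(q,p)$, which is absurd because $K$ is the closure of the positive $T(q,p)$ braid with the extra nontrivial positive full twists $(\sigma_1\cdots\sigma_{a_i-1})^{s_ia_i}$ inserted (so the $a_i$ would all have to vanish). The boundary subcase $c=a_n$ is likewise handled concretely: merge via \cite[Lemma 2.7]{dePaivaPurcell:SatellitesLorenz} to $T(\dots,(a_{n-1},s_{n-1}a_{n-1}),(a_n,s_na_n+q))$, read off braid index $a_n$, and again deduce $k=p$ and a contradiction --- rather than your under-specified ``re-enter the argument with fewer twisting pairs,'' for which it is not clear what quantity decreases. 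In short: replace the induction in the case $a_n<c$ by the direct computation of the braid index of $L$ via Lemma~\ref{braidindex} and its comparison with the braid index of $T(q,k-rq)$; without that (or an honest proof of your final reduction step), the proof is incomplete.
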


\begin{proof}
As $a_n < q < p$, the T-link $$K' = T((a_1, s_1a_1), \dots, (a_n, s_na_n), (q, p))$$ is equivalent to $K$ and has braid index equal to $q$ by Lemma~\ref{braidindex}.

Suppose that $K$ is a torus link. Then, $K$ is the $(q, k)$-torus link with $k>0$.
Let $r$ be a positive integer such that $0\leq p-rq< q$. The T-link $$K'_{-rq} = T((a_1, s_1a_1), \dots, (a_n, s_na_n), (q, p-rq))$$ is the torus link $T(q, k-rq)$ by Lemma~\ref{Los}. 

If $p-rq=0$, then $K'_{-rq}$ would be a splittable link. But, by Lemma 3.2 of \cite{HyperbolicTwistedTorusLinks}, torus links are irreducible. So, $K'_{-rq}$ can't be a torus link, which is a contradiction.

If $p-rq =1$, then $K'$ is a knot. However, by Lemma~\ref{1}, $K'$ can't be a torus knot. 

Thus, $p-rq>1$.
It is not possible that $a_n>p-rq$ by Lemma~\ref{q<a_n}. So, $a_n\leq p -rq$. 
If $a_n = p -rq$, then, by \cite[Lemma 2.7]{dePaivaPurcell:SatellitesLorenz}, 
$K'_{-rq}$ is equivalent to the T-link $$T((a_1, s_1a_1), \dots, (a_{n-1}, s_{n-1}a_{n-1}), (a_n, s_na_n + q)),$$which has braid index equal to $a_n$ by Lemma~\ref{braidindex}. Thus, $a_n = k-rq$ as $a_n \neq q$. This implies that $k = a_n+rq = p$. Hence, $a_1, \dots, a_n = 0$ as $K$ is the $(p, q)$-torus knot. This is a contradiction. 
If $a_n < p -rq$, then $K'_{-rq}$ has braid index equal to $p -rq$ by Lemma~\ref{braidindex}. So, $p -rq = q$ or $k-rq$. It is not possible that $p -rq = q$ since $0\leq p-rq< q$. If $p -rq = k-rq$, then $k = p$, which we have already ruled out.

Therefore, $K$ is not a torus link.
\end{proof}

\begin{proposition}\label{proposition1}
Let $p,q, a_1, \dots, a_n, s_1, \dots, s_n$ be positive integers such that $1<q<p$, $gcd(p, q)>1$, and $1<a_1<\dots<a_n<p$ with $a_i\neq q$. Then, the T-link $$K = T((a_1, s_1a_1), (a_2, s_2a_2), \dots, (a_n, s_na_n), (p, q))$$ is never a torus link.
\end{proposition}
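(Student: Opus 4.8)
The plan is to reduce everything to the two lemmas already proved, splitting into cases according to where $a_n$ sits relative to $q$. Since the hypotheses give $a_i \neq q$ for all $i$, and in particular $a_n \neq q$, exactly one of the possibilities $q < a_n$ or $a_n < q$ occurs, so the case division below is exhaustive. In both cases $a_n < p$ is given.

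\textbf{Case $q < a_n$.} Then we are precisely in the situation $q < a_n < p$, and Lemma~\ref{q<a_n} directly asserts that $K$ is not a torus link; note this case does not even use $\gcd(p,q) > 1$. \textbf{Case $a_n < q$.} Now $a_n < q < p$, which is the setting of Lemma~\ref{2c}. The point where the hypothesis $\gcd(p,q) > 1$ enters is the elementary observation that $p$ cannot equal $bq + 1$ for any $b > 0$: if it did, then $p - bq = 1$ would be divisible by $\gcd(p,q)$, forcing $\gcd(p,q) = 1$, contrary to assumption. Hence the first disjunct ``$p \neq bq + 1$ for $b > 0$'' in the hypothesis of Lemma~\ref{2c} holds, and the auxiliary condition ``$n > 1$ when $p$ and $q$ are coprime'' is vacuous since $p,q$ are not coprime. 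Lemma~\ref{2c} then gives that $K$ is not a torus link, and combining the two cases proves the proposition.

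I do not expect a substantial obstacle here: all the real work sits in Lemmas~\ref{q<a_n} and \ref{2c}. If anything needs care it is only the bookkeeping — confirming the case split is exhaustive (which relies on $a_n \neq q$) and checking that Lemma~\ref{2c} is invoked with all of its hypotheses genuinely satisfied, in particular that no restriction on $s_1$, or on whether $a_2 = a_1 + 1$, is required, precisely because the $p \neq bq + 1$ clause already holds automatically once $\gcd(p,q) > 1$.
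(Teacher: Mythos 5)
Your proposal is correct and matches the paper's proof essentially verbatim: the same case split on whether $q < a_n$ or $a_n < q$, with Lemma~\ref{q<a_n} handling the first case and the observation that $\gcd(p,q)>1$ rules out $p = bq+1$ allowing Lemma~\ref{2c} to handle the second. The extra bookkeeping you note (exhaustiveness via $a_n \neq q$, vacuousness of the coprimality clause in Lemma~\ref{2c}) is implicit in the paper and harmless to spell out.
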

\begin{proof}
If $q< a_n$, then $K$ is not a torus link by Lemma~\ref{q<a_n}. 

Consider now that $a_n < q $. It is not possible that $p = bq +1$ for $b>0$ otherwise $p$ and $q$ would be coprime.
Thus, from Lemma~\ref{2c}, $K$ is not a torus link.
\end{proof}

\begin{proposition}\label{proposition2}
Let $p,q, a_1, \dots, a_n, s_1, \dots, s_n$ be positive integers such that $1<q<p$, $gcd(p, q)=1$, and $1<a_1<\dots<a_n<p$ with $a_i\neq q$ and $n>1$.
If $q< a_n$, or $p \neq bq +1$, or $s_1 > 1$, or $a_2 \neq a_1 +1$ for $b>0$, then the T-knot $$T((a_1, s_1a_1), (a_2, s_2a_2), \dots, (a_n, s_na_n), (p, q))$$ is never a torus knot.
\end{proposition}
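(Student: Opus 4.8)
The plan is to reduce Proposition~\ref{proposition2} to the two structural lemmas already established, Lemma~\ref{q<a_n} and Lemma~\ref{2c}, by a case split on the position of $q$ relative to $a_n$. Since the hypotheses include $a_i \neq q$ for every $i$, in particular $a_n \neq q$, and together with $a_n < p$ this forces exactly one of the two situations $q < a_n < p$ or $a_n < q < p$. I would dispose of each separately.

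First I would treat the case $q < a_n < p$. This is precisely the hypothesis of Lemma~\ref{q<a_n}, which immediately gives that the T-link $K = T((a_1, s_1a_1), \dots, (a_n, s_na_n), (p, q))$ is not a torus link, hence in particular not a torus knot. Note that in this case the conclusion holds regardless of whether any of the other listed conditions ($p \neq bq+1$, $s_1 > 1$, $a_2 \neq a_1+1$) is satisfied, since the disjunct $q < a_n$ alone suffices.

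Next I would treat the case $a_n < q < p$ (using $p > q$ from the hypothesis). Here the disjunct ``$q < a_n$'' of the hypothesis fails, so one of the remaining conditions — $p \neq bq+1$ for all $b>0$, or $s_1 > 1$, or $a_2 \neq a_1 + 1$ — must hold. I would then invoke Lemma~\ref{2c}: its standing requirement ``assume $n>1$ when $p$ and $q$ are coprime'' is met because $\gcd(p,q)=1$ and $n>1$ are both assumed in Proposition~\ref{proposition2}. Lemma~\ref{2c} then yields that $K$ is not a torus link, hence not a torus knot. Combining the two cases completes the proof.

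I do not expect a genuine obstacle here: all the real content is already contained in Lemmas~\ref{q<a_n} and \ref{2c} (and, behind the latter, in Lemma~\ref{1}). The only point that needs a moment of care is the bookkeeping — verifying that the standing hypotheses of Proposition~\ref{proposition2}, namely $1<q<p$, $\gcd(p,q)=1$, $1<a_1<\dots<a_n<p$ with $a_i\neq q$, and $n>1$, line up exactly with what each lemma requires as input (in particular that $a_n < p$ is what makes Lemma~\ref{q<a_n} applicable in the first case, and that coprimality together with $n>1$ is exactly the ``$n>1$ when $p$ and $q$ are coprime'' clause of Lemma~\ref{2c}), so that no hidden hypothesis is being smuggled in.
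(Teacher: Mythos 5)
Your proposal is correct and follows exactly the same route as the paper: split into the cases $q<a_n$ and $a_n<q$ (using $a_n\neq q$), apply Lemma~\ref{q<a_n} in the first case and Lemma~\ref{2c} in the second, noting that $\gcd(p,q)=1$ and $n>1$ satisfy the standing clause of Lemma~\ref{2c}. The paper's own proof is just a terser version of the same case analysis.
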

\begin{proof}
If $q< a_n$, then it follows from Lemma~\ref{q<a_n} that $K$ is not a torus knot. 
If $a_n < q $, under the hypotheses, $K$ is not a torus knot from Lemma~\ref{2c} either.
\end{proof}


\begin{named}{Theorem~\ref{maintheorem}}
Let $p,q, a_1, \dots, a_n, s_1, \dots, s_n$ be positive integers such that $1<q<p$ and $1<a_1<\dots<a_n<p$ with $a_i\neq q$.
\begin{itemize}
\item If $gcd(p, q)>1$, then the T-link $$T((a_1, s_1a_1), (a_2, s_2a_2), \dots, (a_n, s_na_n), (p, q))$$ is never a torus link;

\item Otherwise, if $q< a_n$, or $p \neq bq +1$, or $s_1 > 1$, or $a_2 \neq a_1 +1$ for $b>0$, then the T-knot $$T((a_1, s_1a_1), (a_2, s_2a_2), \dots, (a_n, s_na_n), (p, q))$$ is never a torus knot for $n>1$.
\end{itemize}
\end{named}
\begin{proof}
It follows from Propositions~\ref{proposition1} and ~\ref{proposition2}.
\end{proof}

\begin{corollary}\label{corollary}
Let $p,q, a_1, \dots, a_n, s_1, \dots, s_n$ be positive integers such that $1<q<p$, $gcd(p, q)=1$, and $1<a_1<\dots<a_n<p$ with $a_i\neq q$. If each $s_i$ is greater than one, then the T-knot $$K = T((a_1, s_1a_1), (a_2, s_2a_2), \dots, (a_n, s_na_n), (p, q))$$ is not a torus knot.
\end{corollary}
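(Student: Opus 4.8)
The plan is to derive this corollary as a direct consequence of Theorem~\ref{maintheorem}, specifically of its second bullet point (equivalently, of Proposition~\ref{proposition2}). The hypotheses of the corollary already include $1<q<p$, $\gcd(p,q)=1$, and $1<a_1<\dots<a_n<p$ with each $a_i\neq q$. The only thing missing to invoke the theorem is the condition $n>1$ and the disjunction ``$q<a_n$, or $p\neq bq+1$, or $s_1>1$, or $a_2\neq a_1+1$ for $b>0$.'' So the proof is really just a matter of checking that the extra hypothesis ``each $s_i>1$'' forces us into the regime covered by the theorem, after handling the small case $n=1$.

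First I would dispose of the case $n=1$ separately. When $n=1$, the knot is $T((a_1,s_1a_1),(p,q))$, a twisted torus knot of the form $T((r,sr),(p,q))$ with $r=a_1$ and $s=s_1>1$; by the work of Lee and of the ``Positively twisted'' paper cited in the introduction (\cite[Theorem 1.1]{LeeTorusknotsobtained}, \cite[Theorem 1.1]{Positively}), such a knot is not a torus knot under the stated constraints $1<q<p$, $1<a_1<p$, $a_1\neq q$. (Alternatively, one could simply state the corollary for $n>1$, matching the hypothesis of Theorem~\ref{maintheorem}; but since the introduction already asserts the $n=1$ case is known, it is cleanest to include it.) For $n>1$, I would observe that the hypothesis $s_1>1$ is exactly one of the disjuncts ``$s_1>1$'' appearing in the second bullet of Theorem~\ref{maintheorem}, so that bullet applies verbatim and $K$ is not a torus knot.

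The main (and essentially only) obstacle is bookkeeping: making sure the corollary's hypotheses line up precisely with those of Proposition~\ref{proposition2}, in particular the requirement $a_i\neq q$ for all $i$ and the coprimality $\gcd(p,q)=1$. Both are present. Since $s_1>1$ is among the permitted disjuncts, no case analysis on whether $p=bq+1$ or $a_2=a_1+1$ is needed at all — the corollary is strictly weaker than the theorem in that it only asks for $s_1>1$ (indeed it asks for all $s_i>1$, which is more than enough). Thus the proof is a one-line deduction:

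\begin{proof}
If $n=1$, then $K=T((a_1,s_1a_1),(p,q))$ is a twisted torus knot of the form $T((r,sr),(p,q))$ with $s=s_1>1$, $1<q<p$, $1<r=a_1<p$ and $a_1\neq q$; by \cite[Theorem 1.1]{LeeTorusknotsobtained} and \cite[Theorem 1.1]{Positively} it is not a torus knot. If $n>1$, then since $s_1>1$ we are in the second case of Theorem~\ref{maintheorem} (equivalently, Proposition~\ref{proposition2} applies), so $K$ is not a torus knot.
\end{proof}
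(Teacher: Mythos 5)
Your proposal is correct and follows essentially the same route as the paper: the paper's own proof also splits into the case $n=1$, handled by Lee's classification \cite[Theorem 1.1]{LeeTorusknotsobtained} (you additionally cite \cite[Theorem 1.1]{Positively}, which is harmless), and the case $n>1$, handled by Theorem~\ref{maintheorem} via the disjunct $s_1>1$. Your version just makes explicit which disjunct of the theorem's second bullet is being used, which the paper leaves implicit.
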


\begin{proof} 
If $n=1$, then the T-knot $K$ is not a torus knot by [\cite{LeeTorusknotsobtained}, Theorem 1.1]. Otherwise, the T-knot $K$ is not a torus knot by Theorem~\ref{maintheorem}.
\end{proof}



%

\bibliographystyle{amsplain}  

\bibliography{Torus_Lorenz_Links_obtained_by_full_twists}

\end{document}